\documentclass{article}

\usepackage{microtype}
\usepackage{graphicx}
\usepackage{subcaption}
\usepackage{booktabs} 
\usepackage{tikz}

\usepackage{hyperref}

\usetikzlibrary{arrows.meta, positioning, calc, shapes.geometric, decorations.pathreplacing}

\usepackage[preprint]{icml2026}

\usepackage{amsmath}
\usepackage{amssymb}
\usepackage{mathtools}
\usepackage{amsthm}

\usepackage[capitalize,noabbrev]{cleveref}

\theoremstyle{plain}
\newtheorem{theorem}{Theorem}[section]
\newtheorem{proposition}[theorem]{Proposition}
\newtheorem{lemma}[theorem]{Lemma}

\theoremstyle{definition}
\newtheorem{definition}[theorem]{Definition}

\theoremstyle{remark}

\usepackage[textsize=tiny]{todonotes}

\icmltitlerunning{Unlocked Backpropagation using Wave Scattering}

\begin{document}

\twocolumn[
  \icmltitle{Unlocked Backpropagation using Wave Scattering}

  \icmlsetsymbol{equal}{*}

  \begin{icmlauthorlist}
    \icmlauthor{Christian Pehle}{cshl}
    \icmlauthor{Jean-Jacques Slotine}{mit}
  \end{icmlauthorlist}

  \icmlaffiliation{cshl}{Cold Spring Harbor Laboratory, Cold Spring Harbor, NY 11724}
  \icmlaffiliation{mit}{Massachusetts Institute of Technology, Cambridge, MA 02139}

  \icmlcorrespondingauthor{Christian Pehle}{pehle@cshl.edu}
  \icmlcorrespondingauthor{Jean-Jacques Slotine}{jjs@mit.edu}

  \icmlkeywords{Machine Learning, ICML}

  \vskip 0.3in
]



\printAffiliationsAndNotice{}  

\begin{abstract}
Both the backpropagation algorithm in machine learning and the maximum principle in optimal control theory are posed as a two-point boundary problem, resulting in a “forward-backward” lock. We derive a reformulation of the maximum principle in optimal control theory as a hyperbolic initial value problem by introducing an additional ``optimization time'' dimension. We introduce counter-propagating wave variables with finite propagation speed and recast the optimization problem in terms of scattering relationships between them. This relaxation of the original problem can be interpreted as a physical system that equilibrates and changes its physical properties in order to minimize reflections. We discretize this continuum theory to derive a family of fully unlocked algorithms suitable for training neural networks. Different parameter dynamics, including gradient descent, can be derived by demanding dissipation and minimization of reflections at parameter ports. These results also imply that any physical substrate that supports the scattering and dissipation of waves can be interpreted as solving an optimization problem.
\end{abstract}
\section{Introduction}
The training of deep neural networks is traditionally dominated by the backpropagation algorithm. While highly successful, backpropagation imposes a strict sequential dependency known as the "forward-backward lock": the weights in earlier layers cannot be updated until the forward pass has reached the output and the error signal has propagated all the way back. This requirement for global locking prevents fully parallel updates and creates memory pressure, as the intermediate activations of every layer must be buffered to compute gradients during the backward pass. Furthermore, it lacks biological plausibility, as real synapses do not buffer past states while waiting for distant, global error signals to undergo plasticity. Indeed, enforcing this rigid orchestration and steep memory footprint is notoriously difficult to realize outside of a narrow set of conventional physical substrates (e.g. \cite{pai2023experimentally}).

Mathematically, backpropagation is the discrete realization of the Pontryagin Maximum Principle (PMP) \cite{bryson1969applied}\footnote{Sections 2.1 and 2.2}. It treats the training process as a Two-Point Boundary Value Problem (TPBVP): the network state is fixed at the input (time $t=0$), while the costate (adjoint) is fixed at the output (time $t=T$). Standard numerical methods often solve this iteratively, strictly enforcing the lock.

Physical systems do not solve boundary value problems instantaneously. When a mechanical force is applied to one end of a steel rod, the other end does not move immediately. Instead, a wave of stress propagates through the material at the speed of sound. The system reaches equilibrium not through a global solver, but through local interactions and the eventual dissipation of transient energy. In fact this is the principle by which any physical system supporting the propagation and dissipation of wave-like excitations adapts. Taking the finite speed of propagation into account is also critical for control \cite{niemeyer2002stable} and our approach is inspired by the introduction of wave-variables in this context.

In this work, we propose a relaxation of the optimal control problem underpinning deep learning. By introducing an auxiliary ``optimization time'' dimension $\tau$, we lift the training dynamics from a trajectory on a line to a field theory on a $(1+1)$-dimensional worldsheet. We reinterpret the neural network not as a function composition, but as a physical transmission line. Signals propagate as counter-propagating waves with finite speed, and the enforcement of optimality conditions occurs via local wave scattering.

This reformulation yields the following contributions: \begin{enumerate} \item \textbf{Unlocked Parallelism:} We derive a hyperbolic relaxation of the Maximum Principle where layers are decoupled. Information propagates at a finite speed, allowing all layers to update their states and parameters simultaneously based solely on local variables. \item \textbf{Wave-PMP Algorithm:} We discretize this continuum theory into a concrete training algorithm (Alg.~\ref{alg:worldsheet_unlocked}) that handles variable layer widths via impedance-matching junctions, enabling fully asynchronous distributed training. \item \textbf{The Principle of Minimal Reflection:} We provide a physical derivation for parameter update laws. We show that Gradient Descent, Momentum, and Newton's method arise naturally from the boundary condition of minimizing wave reflections at the parameter ports. \end{enumerate}

This perspective suggests that the ``forward-backward lock'' is an artifact of taking the speed of information propagation to infinity. By restoring a finite speed of propagation, we recover a physical system that supports scattering, dissipation, and local relaxation — concepts that may bridge the gap between backpropagation in artificial systems and physical learning systems.

\section{Maximum Principle in Wave-Variables}
\label{sec:maximum_principle_in_wave_variables}

\begin{theorem}[Pontryagin Maximum Principle]
\label{thm:maximum_principle}
Consider the control problem of minimizing a cost functional
\begin{equation}
\label{eq:continuous_cost}
J = \Phi(x(T)) + \int_0^T L(x(t), u(t)) \mathrm{dt}
\end{equation}
where $x$ is the state and $u$ the control input, subject to the dynamics
\begin{equation*}
\dot{x} = f(x,u).
\end{equation*}
Let $H(x,\lambda,u) = L(x,u) + \lambda^\top f(x,u)$ be the Hamiltonian.

The necessary conditions for optimality are 
\begin{align}
\label{eq:continuous_dynamics}
\dot{x} &= \nabla_\lambda H = f(x, u),\, x(0) = x_{0} \\
\label{eq:costate_dynamics}
\dot{\lambda} &= -\nabla_x H, \quad \lambda(T) = \nabla_x \Phi(x(T)) \\
0 &= \nabla_u H
\end{align}
These equations constitute a Two-Point Boundary Value Problem (TPBVP) due to the split boundary conditions at $t=0$ and $t=T$.
\end{theorem}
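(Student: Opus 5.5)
I would prove this in the standard Lagrange-multiplier (calculus of variations) form, which is the version the paper needs: $u$ ranges over an open set, $f$, $L$, $\Phi$ are continuously differentiable, and we derive first-order necessary conditions rather than the full maximum condition over a constrained control set.

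\textbf{Adjoining the constraint.} Adjoin the dynamics to the cost with a multiplier trajectory $\lambda(t)$, forming the augmented functional
\begin{equation*}
\bar J = \Phi(x(T)) + \int_0^T \bigl( H(x,\lambda,u) - \lambda^\top \dot x \bigr)\,\mathrm{d}t .
\end{equation*}
On any trajectory satisfying $\dot x = f(x,u)$, $x(0)=x_0$, we have $\bar J = J$ for every choice of $\lambda$. So a minimizer of $J$ under the constraint is a stationary point of $\bar J$ with respect to admissible variations.

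\textbf{First variation.} Perturb $u \mapsto u + \epsilon\,\delta u$. Let $\delta x$ be the induced state variation; it solves the linearized equation $\dot{\delta x} = \partial_x f\,\delta x + \partial_u f\,\delta u$ with $\delta x(0)=0$. Integrating $-\lambda^\top \dot{\delta x}$ by parts gives
\begin{equation*}
\delta \bar J = \bigl(\nabla_x\Phi(x(T)) - \lambda(T)\bigr)^\top \delta x(T) + \int_0^T \Bigl( (\nabla_x H + \dot\lambda)^\top \delta x + (\nabla_u H)^\top \delta u \Bigr)\,\mathrm{d}t .
\end{equation*}
The boundary term at $t=0$ vanishes because $\delta x(0)=0$.

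\textbf{Choosing $\lambda$.} Choose $\lambda$ to solve the linear terminal value problem $\dot\lambda = -\nabla_x H$, $\lambda(T)=\nabla_x\Phi(x(T))$. This is linear in $\lambda$ with continuous coefficients along the fixed trajectory, so it has a unique solution on $[0,T]$. With this choice the boundary term and the $\delta x$ integrand both vanish. What remains is
\begin{equation*}
\delta\bar J = \int_0^T (\nabla_u H)^\top \delta u\,\mathrm{d}t .
\end{equation*}
Optimality requires $\delta\bar J = 0$ for all admissible $\delta u$. Since $\nabla_u H$ is continuous along the optimal trajectory, the fundamental lemma of the calculus of variations gives $\nabla_u H = 0$. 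The state equation is immediate from the constraint, and it equals $\nabla_\lambda H$ because $H$ is affine in $\lambda$.

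\textbf{Main obstacle and the TPBVP remark.} The delicate step is justifying that the first variation is the actual Gâteaux derivative of $J$. This means showing $\delta x$ is the derivative of the state with respect to $\epsilon$, via differentiable dependence of ODE solutions on parameters, using Grönwall's inequality and $C^1$ regularity of $f$. Full Pontryagin generality, with closed control sets and a minimum condition instead of $\nabla_u H=0$, would need needle variations instead; I would cite the classical result (e.g.\ \cite{bryson1969applied}) rather than reproduce it. The closing remark of the theorem is then just an observation: $x$ is pinned at $t=0$, while $\lambda$ is pinned at $t=T$ through $x(T)$, so neither equation can be integrated alone.
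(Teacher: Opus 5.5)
Your proposal is correct and follows the argument the paper relies on: the paper gives no proof of its own but cites \cite{bryson1969applied} (Sections 2.1--2.2), whose derivation is the same adjoin-$\lambda$, integrate $-\lambda^\top \dot{\delta x}$ by parts, choose $\lambda$ via the terminal-value adjoint equation, read off $\nabla_u H = 0$ argument. The one thing you should state explicitly is the admissible control class. For example, take continuous $u$ with values in an open set, so that two-sided variations stay admissible and the fundamental lemma applies; with merely measurable controls you only get $\nabla_u H = 0$ almost everywhere.
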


We introduce an auxiliary "optimization time" dimension $\tau$. The problem is lifted from a the trajectory line $[0,T]$ to a $1+1$ dimensional \emph{Worldsheet} $\Sigma$.
\begin{definition}[Worldsheet $\Sigma$]\label{dfn:world_sheet} The domain of our fields is the strip $\Sigma = [0,T]_t \times [0,\infty)_\tau$. We equip $\Sigma$ with a Lorentz metric,
\begin{equation*}
ds_\Sigma^2 = -c^2 \,\mathrm{d\tau}^2 + \mathrm{dt}^2
\end{equation*}
where $t$ is the physical time of the control system, $\tau$ is the algorithmic time of the solver and $c$ is the speed of information propagation.
\end{definition}

The main idea is to use the additional optimization dimension $\tau$, to formulate relaxation dynamics. We introduce continuous time residuals for the maximum principle
\begin{align}
\label{eq:continuous_residual_x}
r_x(t,\tau) &= \partial_t x(t,\tau) - \nabla_\lambda H \\
\label{eq:continuous_residual_lambda}
r_\lambda(t,\tau) &= \partial_t \lambda(t,\tau) + 
\nabla_x H\\
\label{eq:continuous_residual_u}
r_u(t,\tau) &= \nabla_u H
\end{align}
where we lifted the state and co-state to fields on the world sheet.

Let $M$ be a positive definite metric (symmetric, $M \succ 0$). Factor it as 
\begin{equation}
M = \Theta^\top \Theta
\end{equation}
with $\Theta$ invertible (often chosen diagonal or triangular in computation).

\begin{definition}[Wave variables and wave residuals]
Let $\Theta\in GL(n)$ and define
\begin{equation}
\label{def:wave_variables}
p = \Theta\,\partial_t x,\, q = \Theta^{-\top}\lambda,\,
w_\pm = \frac{1}{\sqrt2}(p\pm q). 
\end{equation}
Define the wave residuals
\begin{equation}
\label{eq:pmp_residuals}
E_\pm = \frac{1}{\sqrt2}\bigl(\Theta r_x \pm \Theta^{-\top}r_\lambda\bigr).
\end{equation}
This change of variables is invertible
\begin{equation}
\dot{x} = \frac{1}{\sqrt{2}} \Theta^{-1} (w_+ + w_{-}),\, \lambda = \frac{1}{\sqrt{2}} \Theta^{\top} (w_+ - w_{-})
\end{equation}
\end{definition}

\begin{proposition}[Cross-coupled $\tau$-dynamics yields forward/backward waves]
Assume $\Theta$ is independent of $t$.
If $(p,q)$ evolve by
\begin{align*}
\partial_\tau p + c\,\partial_t q &= -c\,\Theta r_x\\
\partial_\tau q + c\,\partial_t p &= -c\,\Theta^{-\top}r_\lambda,
\end{align*}
then the wave variables satisfy
\begin{align}
\label{eq:forward_waves}
\partial_\tau w_+ + c\,\partial_t w_+ &= -c\,E_+\\
\label{eq:backward_waves}
\partial_\tau w_- - c\,\partial_t w_- &= -c\,E_-.  
\end{align}
\end{proposition}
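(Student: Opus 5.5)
The argument is a direct linear change of variables; the only hypothesis used is that $\Theta$ (and hence $\Theta^{-\top}$) does not depend on $t$. Because $c$ is constant, the operators $\partial_\tau$ and $c\,\partial_t$ commute with the constant linear map $(p,q)\mapsto w_\pm=\tfrac{1}{\sqrt2}(p\pm q)$. The $\tau$-independence of $\Theta$ is not needed here: the proposition is stated directly in terms of $(p,q)$, so we never differentiate the definitions $p=\Theta\,\partial_t x$, $q=\Theta^{-\top}\lambda$. We only take linear combinations of the two given equations.

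The plan has two steps. First, add the two evolution equations and multiply by $1/\sqrt2$. The left side becomes
\[
\tfrac{1}{\sqrt2}\bigl(\partial_\tau(p+q)+c\,\partial_t(q+p)\bigr)=\partial_\tau w_+ + c\,\partial_t w_+ .
\]
The right side becomes $-c\,\tfrac{1}{\sqrt2}\bigl(\Theta r_x+\Theta^{-\top}r_\lambda\bigr)=-c\,E_+$ by the definition \eqref{eq:pmp_residuals}. This gives \eqref{eq:forward_waves}.

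Second, subtract the second equation from the first and multiply by $1/\sqrt2$. The left side is
\[
\tfrac{1}{\sqrt2}\bigl(\partial_\tau(p-q)+c\,\partial_t(q-p)\bigr)=\partial_\tau w_- - c\,\partial_t w_- ,
\]
because the cross-coupling swaps $p$ and $q$ under $\partial_t$, which produces the sign flip. The right side is $-c\,\tfrac{1}{\sqrt2}\bigl(\Theta r_x-\Theta^{-\top}r_\lambda\bigr)=-c\,E_-$. This gives \eqref{eq:backward_waves}.

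There is no real obstacle. The one point needing care is the sign in the second step: the $t$-derivative term $c\,\partial_t(q-p)=-c\,\partial_t w_-$ is exactly what makes $w_-$ travel in the opposite direction. I would also remark why the assumption on $\Theta$ is stated. If $\Theta$ depended on $t$, the residuals $r_x$ and $r_\lambda$ themselves would be unchanged. However, the $(p,q)$ system would then no longer arise from a conservative wave equation in the original variables $(\partial_t x,\lambda)$: extra terms $\partial_t\Theta$ would appear, acting as distributed reflections that couple $w_+$ and $w_-$. The constancy of $\Theta$ is what guarantees that $w_\pm$ decouple into pure transport along the characteristics $t\mp c\tau=\mathrm{const}$.
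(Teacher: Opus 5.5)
Your proof is correct and is the intended argument: adding and subtracting the two $(p,q)$ equations and scaling by $1/\sqrt2$ gives the two transport equations with sources $-c\,E_\pm$. The paper states the proposition without writing out a proof, and this is the evident one.

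One small correction to your framing: the computation does not use the $t$-independence of $\Theta$ at all, despite your opening claim that it is the only hypothesis used. The map $(p,q)\mapsto w_\pm$ involves no $\Theta$, and $E_\pm$ is built from the same $\Theta r_x$ and $\Theta^{-\top}r_\lambda$ that appear on the right-hand sides. That hypothesis matters only for relating the $(p,q)$ system back to the original variables $(\partial_t x,\lambda)$, as your closing remark indicates.
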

We choose the $\tau$-dynamics such that (i) information propagates at finite speed $c$, and (ii) the wave energy is non-increasing up to boundary power injections, yielding a stable relaxation (see \ref{thm:energy_balance}).

The motivation for introducing wave variables $w_\pm$ is in part because they diagonalize the pairing
\begin{equation}
\label{eq:power_identity}
\lambda^\top \dot{x} = \frac{1}{2}(w^\top_+ w_+ - w^\top_{-} w_{-}).
\end{equation}
In the literature $\dot{x}$ is sometimes called \emph{flow} $f$ and $\lambda$ \emph{effort} $e$ \cite{van2014port}. They are conjugate variables whose product is power.

\begin{lemma}[Boundary conditions in scattering form]
\label{lem:bc_scattering}
Fix a boundary time $t_b\in\{0,T\}$ and suppose that a boundary condition prescribes either flow $f(t_b,\tau)=f_b(\tau)$ or effort 
$e(t_b,\tau)=e_b(\tau).$

Then it can be written as an affine relation between the outgoing and incoming waves:
\begin{align*}
\quad & w_+(t_b,\tau) = -\,w_-(t_b,\tau) + \sqrt2\,\Theta(t_b,\tau)\,f_b(\tau),\\
\quad & w_-(t_b,\tau) = \;\,w_+(t_b,\tau) - \sqrt2\,\Theta(t_b,\tau)^{-\top}\,e_b(\tau).
\end{align*}
\end{lemma}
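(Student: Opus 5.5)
The plan is to invert the wave-variable change of coordinates from the definition of wave variables pointwise at the boundary $t=t_b$, and then solve each type of boundary condition for the outgoing wave. I identify flow with $f=\dot x=\partial_t x$ and effort with $e=\lambda$, matching the power identity \eqref{eq:power_identity}. Because the relation is pointwise in $\tau$ and involves only the boundary slice, it makes no difference whether $\Theta$ depends on $(t,\tau)$. This is why the statement can write $\Theta(t_b,\tau)$ even though the earlier proposition assumed $\Theta$ independent of $t$. Throughout I use only that $\Theta(t_b,\tau)$ is invertible.

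\emph{Flow condition.} From \eqref{def:wave_variables} and the inversion formula, $w_+ + w_- = \sqrt2\,p = \sqrt2\,\Theta\,\partial_t x$. Imposing $\partial_t x(t_b,\tau)=f_b(\tau)$ gives $w_+ + w_- = \sqrt2\,\Theta(t_b,\tau) f_b(\tau)$. Rearranging yields $w_+ = -w_- + \sqrt2\,\Theta f_b$, which is the first relation.

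\emph{Effort condition.} Similarly, $w_+ - w_- = \sqrt2\,q = \sqrt2\,\Theta^{-\top}\lambda$. Imposing $\lambda(t_b,\tau)=e_b(\tau)$ gives $w_- = w_+ - \sqrt2\,\Theta^{-\top} e_b$, which is the second relation. In each case the map from the prescribed datum to the affine relation is reversible, because $\Theta$ is invertible. Hence the scattering form is equivalent to the original boundary condition, not merely implied by it.

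There is no real obstacle here. The one point needing care is interpretive: which wave is ``incoming'' and which is ``outgoing'' at each end. By \eqref{eq:forward_waves} and \eqref{eq:backward_waves}, $w_+$ travels toward increasing $t$ and $w_-$ toward decreasing $t$. So at $t_b=0$ the incoming wave is $w_-$ and the outgoing one is $w_+$, and at $t_b=T$ it is the reverse. I would note that either displayed relation can be solved for whichever wave is outgoing at that end: the coefficient is $\pm 1$, so the reflection is total, with coefficient $-1$ for a flow condition and $+1$ for an effort condition. This covers both $t_b\in\{0,T\}$ and both types of condition. As applied to the PMP, the flow condition is realized at $t=0$ via $x(0)=x_0$, by prescribing $\partial_t x$ there, and the effort condition at $t=T$ via $\lambda(T)=\nabla_x\Phi(x(T))$, with $e_b$ depending on the current state field.
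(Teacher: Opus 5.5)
Your proof is correct and is the computation the paper leaves implicit, since it states the lemma without a written proof: read off $w_+ + w_- = \sqrt2\,\Theta\,\partial_t x$ and $w_+ - w_- = \sqrt2\,\Theta^{-\top}\lambda$ from the wave-variable definition, then substitute the prescribed flow or effort; your remark that invertibility of $\Theta$ makes this an equivalence is a small bonus. One caveat on your closing aside, which the lemma does not need: $x(0)=x_0$ fixes the state, not $\partial_t x(0)$, so in the continuous setting it is not literally a prescribed flow, and it takes the flow-type scattering form only in the paper's discrete scheme, where the waves are built from $x_k$ instead of $\partial_t x$ (cf.\ step (E) of Algorithm~\ref{alg:worldsheet_unlocked}).
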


The boundary condition \eqref{eq:costate_dynamics} prescribes an effort. By lemma ~\ref{lem:bc_scattering}:\begin{equation}
w_-(T,\cdot) = w_+(T,\cdot) - \sqrt2\,\Theta(T,\cdot)^{-\top}\,\nabla\Phi\bigl(x(T,\cdot)\bigr).
\label{eq:bc_wminus_terminal}
\end{equation}

\begin{theorem}[Wave energy balance and dissipation]
\label{thm:energy_balance}
Let $w_\pm(t,\tau)$ satisfy the wave equations
\begin{align}
\partial_\tau w_+ + c\,\partial_t w_+ &= -c\,E_+,
\label{eq:forward_waves_again}\\
\partial_\tau w_- - c\,\partial_t w_- &= -c\,E_-,
\label{eq:backward_waves_again}
\end{align}
on $t\in[0,T]$, $\tau\ge 0$. Define the wave energy
\begin{equation}
\label{eq:wave_energy_def}
V(\tau)=\frac12\int_0^T \Big(\|w_+(t,\tau)\|^2+\|w_-(t,\tau)\|^2\Big)\,dt.
\end{equation}
Then $V$ satisfies the exact balance identity
\begin{align}
\label{eq:energy_balance_identity}
\partial_\tau V(\tau)&=
\frac{c}{2}\Big(\|w_+(0,\tau)\|^2-\|w_-(0,\tau)\|^2\Big)\\
&+\frac{c}{2}\Big(\|w_-(T,\tau)\|^2-\|w_+(T,\tau)\|^2\Big)\nonumber\\
&-
c\int_0^T \big(w_+^\top E_+ + w_-^\top E_-\big)\, dt.
\end{align}

In particular, if the boundary conditions are \emph{passive in scattering form}, i.e.
\begin{align}
\label{eq:passive_bc}
\|w_+(0,\tau)\| &\le \|w_-(0,\tau)\|,\\
\|w_-(T,\tau)\| &\le \|w_+(T,\tau)\|
\end{align}
(for all $\tau$), and if the distributed forcing is \emph{dissipative} in the sense that
\begin{equation}
\label{eq:bulk_dissipativity}
\int_0^T \big(w_+^\top E_+ + w_-^\top E_-\big)\,dt \;\ge\; 0,
\end{equation}
then $\partial_\tau V(\tau)\le 0$.
\end{theorem}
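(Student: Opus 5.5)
The plan is a direct energy computation: differentiate $V$ under the integral, use the transport equations \eqref{eq:forward_waves_again}--\eqref{eq:backward_waves_again} to trade $\tau$-derivatives for $t$-derivatives, and integrate the resulting exact $t$-derivatives to the boundary. I assume enough regularity (say $w_\pm\in C^1(\Sigma)$, or a density argument from smooth data) to differentiate under the integral sign and apply the fundamental theorem of calculus in $t$.

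\textbf{Step 1 (differentiate).} Differentiating \eqref{eq:wave_energy_def} gives
\begin{equation*}
\partial_\tau V = \int_0^T \big(w_+^\top \partial_\tau w_+ + w_-^\top \partial_\tau w_-\big)\,dt .
\end{equation*}
Substitute $\partial_\tau w_+ = -c\,\partial_t w_+ - cE_+$ and $\partial_\tau w_- = c\,\partial_t w_- - cE_-$ from the wave equations. The integrand becomes
\begin{equation*}
-\tfrac{c}{2}\partial_t\|w_+\|^2 + \tfrac{c}{2}\partial_t\|w_-\|^2 - c\big(w_+^\top E_+ + w_-^\top E_-\big),
\end{equation*}
using $w^\top\partial_t w=\tfrac12\partial_t\|w\|^2$ for the Euclidean norm.

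\textbf{Step 2 (integrate in $t$).} The first two terms are exact derivatives. The forward term contributes $\tfrac{c}{2}\big(\|w_+(0,\tau)\|^2-\|w_+(T,\tau)\|^2\big)$, and the backward term contributes $\tfrac{c}{2}\big(\|w_-(T,\tau)\|^2-\|w_-(0,\tau)\|^2\big)$. Regrouping these by boundary point, $t=0$ and $t=T$, gives exactly the two boundary lines of \eqref{eq:energy_balance_identity}. The bulk term is the remaining integral. This proves the identity.

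\textbf{Step 3 (the inequality).} Under \eqref{eq:passive_bc}, the $t=0$ bracket $\|w_+(0)\|^2-\|w_-(0)\|^2$ is $\le 0$, and the $t=T$ bracket $\|w_-(T)\|^2-\|w_+(T)\|^2$ is $\le 0$. Under \eqref{eq:bulk_dissipativity}, the last term is $\le 0$. Since $c>0$, it follows that $\partial_\tau V\le 0$.

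No step is a real obstacle; the main point needing care is the regularity justification. If only weak solutions are available, I would prove the identity for smooth solutions and pass to the limit. The identity then holds in the integrated-in-$\tau$ form, which still yields monotonicity of $V$.
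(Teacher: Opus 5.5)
Your proposal is correct, and it is the direct energy computation that the identity encodes; the paper states the theorem without writing out a proof. You differentiate $V$, substitute $\partial_\tau w_\pm = \mp c\,\partial_t w_\pm - c\,E_\pm$, integrate the exact $t$-derivatives to the boundary traces with the correct signs, and then read off $\partial_\tau V\le 0$ from passivity, bulk dissipativity and $c>0$.
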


We can rewrite the residual for the control $r_u$ in terms of wave variables
\begin{equation}
r_u = \nabla_u L + \frac{1}{\sqrt{2}} (w_+ - w_-)^\top \Theta \nabla_u f.
\end{equation}
At each $t$, view the control update direction $v(t,\tau) = \partial_\tau u(t,\tau)$ as a \emph{flow}
and the stationarity residual $r_u(t,\tau) = \nabla_u H$ as an \emph{effort}. Their instantaneous port power is
\begin{equation}
\label{eq:power_control_port}
 P_u(t,\tau)=r_u(t,\tau)^\top v(t,\tau).
\end{equation}
If we define a control metric $M_u$ (symmetric, $M_u \succ 0$) and write $M_u = \Theta^\top_u \Theta_u$, then we can define wave variables for the control (sign convention based on descent)
\begin{align}
z_+ &= \frac{1}{\sqrt{2}}(\Theta_u \partial_\tau u - \Theta_u^{-\top} \nabla_u H)\\
z_- &= \frac{1}{\sqrt{2}}(\Theta_u \partial_\tau u + \Theta_u^{-\top} \nabla_u H)
\end{align}
Then the power factorizes as
\begin{equation}
P_u(t,\tau)=\frac12\big(\|z_+(t,\tau)\|^2-\|z_-(t,\tau)\|^2\big).
\end{equation}
Imposing a \emph{passive} termination means $\|z_+\|\le \|z_-\|$ and hence $P_u\le 0$.

We observe that demanding that no reflection occurs $z_- = 0$ corresponds to 
\begin{equation}
\partial_\tau u = -\Theta^{-1} \Theta_u^{-\top} \nabla_u H = -M^{-1}_u\nabla_u H.
\end{equation}
Choosing $M = \frac{1}{\eta} I$, with $\eta > 0$ the learning rate, recovers gradient descent flow in optimization time $\tau$. Other laws for the control update can be considered.

\section{Physical Interpretation}
The lifted dynamics on the worldsheet $\Sigma=[0,T]_t\times[0,\infty)_\tau$ can be interpreted as a transmission line in the physical time direction $t$, evolving in the solver time $\tau$.
The characteristic variables $w_+$ and $w_-$ represent right- and left-traveling waves (forward and backward in $t$) with finite propagation speed $c$.
Local PMP residuals act as distributed sources that inject corrections into the waves, while any explicit damping in the $\tau$-dynamics acts as dissipation.

At fixed $\tau$, the waves encode a candidate primal/adjoint pair $(\partial_\tau x(\cdot,\tau),\lambda(\cdot,\tau))$ via the inverse wave transform.
A solution of the maximum principle corresponds to vanishing residuals $r_x=r_\lambda=r_u=0$, equivalently vanishing wave residuals $E_\pm=0$.

Boundary conditions appear as terminations of the line.
At $t=0$ the input boundary enforces the prescribed initial state, and at $t=T$ the terminal boundary enforces the transversality condition $\lambda(T)=\nabla\phi(x(T))$ (or, in the learning case, $\lambda_N=\nabla\ell(x_N,y)$).
These terminations induce reflections: incoming waves are computed from outgoing waves by satisfying the boundary constraints. When the termination is passive, the boundary does not inject net energy into the system.

\section{Training Algorithm for Neural Networks}
\label{sec:training_neural_network}

Based on the continuous time formulation we can derive a family of optimization algorithms for artificial neural networks. 

We will begin by describing the algorithm in the simplest formulation. To simplify notation and fix ideas we will focus on a simple feedforward network, modifications to more general cases should be self-evident. The basic idea is to discretize the world-sheet $\Sigma_{t,\tau} = [0,T] \times [0, \infty)$, with the time variable $t_k = k \Delta t $ now related to layer depth $k$. We then want to again introduce a finite propagation speed and define an unlocked parallel update scheme.

To do so we will also go from a continuous time dynamics to a discretized layer map and interpret changes in dimension as internal junctions, where the transmission line changes dimension. 

The resulting algorithm \ref{alg:worldsheet_unlocked} allows parallel unlocked updates in all phases and requires only local communication. Other variants could be derived by starting from continuous time formulation and making other choices of worldsheet and PDE discretization.

\begin{algorithm}[tbh!]
\caption{Unlocked training by worldsheet upwinding + PMP sources}
\label{alg:worldsheet_unlocked}
\begin{algorithmic}[1]
\REQUIRE mini-batch $(x_{\mathrm{in}},y)$, maps $\{f_k(\cdot;\theta_k)\}$, penalties $\{R_k\}$,
impedances $\{\Theta_k\}$, stepsizes $\alpha,\eta$, Courant $\nu\in(0,1]$.
\STATE Initialize waves $\{w_{+,k}^0,w_{-,k}^0\}_{k=0}^N$ (e.g.\ zeros) and parameters $\{\theta_k^0\}$.
\FOR{$n=0,1,2,\dots$}

\vspace{0.2em}
\STATE \textbf{(A) Transport (upwind in depth $k$)}
\STATE Compute $\bar w_{+,k}^n$ forward using \eqref{eq:upwind_transport} on constant-width links
and \eqref{eq:generalized_transport} on width-changing links.
\STATE Compute $\bar w_{-,k}^n$ backward similarly.

\vspace{0.2em}
\STATE \textbf{(B) Reconstruct states (parallel over $k$)}
\FOR{$k=0$ to $N$ \textbf{in parallel}}
\STATE $x_k^n \gets \frac{1}{\sqrt2}\,\Theta_k^{-1}\bigl(\bar w_{+,k}^n+\bar w_{-,k}^n\bigr)$
\STATE $\lambda_k^n \gets \frac{1}{\sqrt2}\,\Theta_k^{\top}\bigl(\bar w_{+,k}^n-\bar w_{-,k}^n\bigr)$
\ENDFOR

\vspace{0.2em}
\STATE \textbf{(C) Local PMP residuals + source injection (parallel)}
\FOR{$k=0$ to $N$ \textbf{in parallel}}
\STATE Compute $r_{x,k}^n, r_{\lambda,k}^n$ by \eqref{eq:node_residuals}.
\STATE $E_{+,k}^n \gets \frac{1}{\sqrt2}\bigl(\Theta_k r_{x,k}^n + \Theta_k^{-\top}r_{\lambda,k}^n\bigr)$
\STATE $E_{-,k}^n \gets \frac{1}{\sqrt2}\bigl(\Theta_k r_{x,k}^n - \Theta_k^{-\top}r_{\lambda,k}^n\bigr)$
\STATE $w_{+,k}^{n+1} \gets \bar w_{+,k}^n - \alpha\,E_{+,k}^n$
\STATE $w_{-,k}^{n+1} \gets \bar w_{-,k}^n - \alpha\,E_{-,k}^n$
\ENDFOR

\vspace{0.2em}
\STATE \textbf{(D) Parameter update (parallel over layers $k=0,\dots,N-1$)}
\FOR{$k=0$ to $N-1$ \textbf{in parallel}}
\STATE $r_{\theta,k}^n \gets {\nabla_{\theta_k}R_k(x_k^n,\theta_k^n) + \bigl(\nabla_{\theta_k} f_k(x_k^n;\theta_k^n)\bigr)^\top \lambda_{k+1}^n}$
\STATE $\theta_k^{n+1} \gets \theta_k^n - \eta\,r_{\theta,k}^n$
\ENDFOR

\vspace{0.2em}
\STATE \textbf{(E) Boundary scattering (ports)}
\STATE Enforce $x_0=x_{\mathrm{in}}$: \quad $w_{+,0}^{n+1} \gets \sqrt2\,\Theta_0 x_{\mathrm{in}} - w_{-,0}^{n+1}$.
\STATE Enforce $\lambda_N = \nabla\ell$: $w_{-,N}^{n+1} \gets {w_{+,N}^{n+1} - \sqrt2\,\Theta_N^{-\top}\nabla_{x_N}\ell(x_N^n,y)}$.
\ENDFOR
\end{algorithmic}
\end{algorithm}

\subsection{Wave variable discretization}

\begin{definition}[Naive Discretized Worldsheet] We introduce a $(t,\tau)$ grid. Let
\begin{align*}
t_k &= k \Delta t,\quad k = 0,\ldots,N, \quad N \Delta t = T\\
\tau^n &= n \Delta \tau,\quad n = 0,1,2,\ldots
\end{align*}
and define the \emph{courant number}
\begin{equation}
\label{eq:cfl_number}
 \nu = \frac{c \Delta \tau}{\Delta t}.
\end{equation}
\end{definition}

We can discretize the equations in section \ref{sec:maximum_principle_in_wave_variables} in several ways. Perhaps the simplest are to discretize the partial derivatives $\partial_t, \partial_\tau$ in \eqref{eq:forward_waves} and \eqref{eq:backward_waves}. For $w_+$ and $w_{-}$ we can use forward and backward (upwind) differences
\begin{align}
\partial_t w^+(t_k, \tau^n) \approx \frac{w^n_{+,k} - w^n_{+,k-1}}{\Delta t}\\
\partial_t w^-(t_k, \tau^n) \approx \frac{w^n_{-,k+1} - w^n_{-,k}}{\Delta t}
\end{align}
and we can use forward Euler in $\tau$
\begin{align}
\partial_\tau w_{\pm}(t_k, \tau^n) \approx \frac{w^{n+1}_{\pm,k} - w^n_{\pm,k}}{\Delta \tau}
\end{align}
Inserting into \eqref{eq:forward_waves}--\eqref{eq:backward_waves} we obtain
\begin{align}
w^{n+1}_{+,k} &= (1-\nu)\, w^n_{+,k} + \nu\, w^n_{+,k-1} - c\,\Delta\tau\, E^n_{+,k},\\
w^{n+1}_{-,k} &= (1-\nu)\, w^n_{-,k} + \nu\, w^n_{-,k+1} - c\,\Delta\tau\, E^n_{-,k}.
\end{align}
Here $\nu = c\Delta\tau/\Delta t$ is the Courant number and $E^{n}_{\pm,k} = E_\pm(t_k,\tau^n)$.
In particular, when $\nu=1$ the transport becomes a pure shift:
\begin{align}
w^{n+1}_{+,k} &= w^n_{+,k-1} - \Delta t\,E^n_{+,k},\\
w^{n+1}_{-,k} &= w^n_{-,k+1} - \Delta t\,E^n_{-,k}.
\end{align}
Instead of using finite differences for the advection, we could also use the exact characteristic solution of the homogeneous part or use other discretization approaches. More sophisticated world-sheet discretizations and solvers are also possible.

At each depth node $k$ we choose a metric factor $\Theta_k\in\mathrm{GL}(n_k)$
(where $n_k$ is the layer width at node $k$).
We use the same scattering coordinates as in Definition~\ref{def:wave_variables},
but now indexed by depth:
\begin{equation}
\label{eq:disc_wave_def}
w_{\pm,k}
=
\frac{1}{\sqrt2}\Big(\Theta_k\,x_k \pm \Theta_k^{-\top}\lambda_k\Big),
\end{equation}
with inverse
\begin{align}
\label{eq:disc_wave_inverse}
x_k &= \frac{1}{\sqrt2}\,\Theta_k^{-1}(w_{+,k}+w_{-,k})\\
\lambda_k &= \frac{1}{\sqrt2}\,\Theta_k^{\top}(w_{+,k}-w_{-,k}).
\end{align}
This choice keeps the boundary conditions in the same scattering form as Lemma~\ref{lem:bc_scattering}.

\subsection{Upwind transport for constant width}
\label{sec:upwind_constant_width}

Discretizing the advection part of \eqref{eq:forward_waves}--\eqref{eq:backward_waves}
by forward Euler in $\tau$ and upwind differences in $t$ yields the usual transport step
\begin{align}
\label{eq:upwind_transport}
\bar w_{+,k}^n &= (1-\nu)\,w_{+,k}^n + \nu\,w_{+,k-1}^n,\, k=1,\dots,N,\\
\bar w_{-,k}^n &= (1-\nu)\,w_{-,k}^n + \nu\,w_{-,k+1}^n,\, k=0,\dots,N-1,
\end{align}
with $\bar w_{+,0}^n := w_{+,0}^n$ and $\bar w_{-,N}^n := w_{-,N}^n$.
When all widths are constant ($n_k\equiv n$), these are well-typed shifts in $\mathbb{R}^n$.

\subsection{Width changes as internal junctions}
\label{sec:width_change_junction}

When widths vary ($n_k\neq n_{k-1}$), the pure shift in \eqref{eq:upwind_transport}
is not type-consistent because $w_{+,k-1}\in\mathbb{R}^{n_{k-1}}$ while $w_{+,k}\in\mathbb{R}^{n_k}$.
We treat such depth indices as internal junctions where the transmission line changes dimension.

Let the layer map be
\begin{equation}
x_{k+1} = f_k(x_k;\theta_k),\qquad k=0,\dots,N-1,
\end{equation}
with penalties $R_k(x_k,\theta_k)$ and terminal loss $\ell(x_N,y)$.
Denote the Jacobian by $J_k = \nabla_{x_k} f_k(x_k;\theta_k)\in\mathbb{R}^{n_{k+1}\times n_k}$.
At a width-changing interface, we replace the identity shift by a power-preserving transformer
induced by the discrete PMP pairing (Jacobian forward, transpose backward):
\begin{equation}
\label{eq:transformer_transport}
\mathcal{T}_k = \Theta_{k+1}\,J_k\,\Theta_k^{-1}\in\mathbb{R}^{n_{k+1}\times n_k}.
\end{equation}
We then transport waves by
\begin{align}
\label{eq:generalized_transport}
\bar w_{+,k+1}^n &= (1-\nu)\,w_{+,k+1}^n + \nu\,\mathcal{T}_k^n\,w_{+,k}^n,\\
\bar w_{-,k}^n   &= (1-\nu)\,w_{-,k}^n   + \nu\,\mathcal{T}_k^{n\top}\,w_{-,k+1}^n.
\end{align}
If $n_{k+1}=n_k$ we may simply take $\mathcal{T}_k=I$ and recover \eqref{eq:upwind_transport}.
In implementation, $\mathcal{T}_k w$ and $\mathcal{T}_k^\top w$ can be applied via JVP/VJP without forming $J_k$ explicitly. This is not the only choice
for

\subsection{Discrete PMP for a Layer Map}
For a depth-$N$ network, let the layer dynamics be
\begin{equation}
x_{k+1} = f_k(x_k;\theta_k),\qquad k=0,\ldots,N-1,
\end{equation}
and let the training objective be
\begin{equation}
J(\theta) = \ell(x_N,y) + \sum_{k=0}^{N-1} R_k(x_k, \theta_k).
\end{equation}
Define the discrete Hamiltonian
\begin{equation}
\label{eq:discrete_hamiltonian}
H_k(x_k,\lambda_{k+1},\theta_k) = \lambda_{k+1}^\top f_k(x_k;\theta_k) + R_k(x_k, \theta_k).
\end{equation}
Then the discrete maximum principle yields the adjoint recursion
\begin{align}
\label{eq:discrete_adjoint}
\lambda_N &= \nabla_{x_N}\ell(x_N,y),\\
\lambda_k &= \nabla_{x_k} H_k(x_k,\lambda_{k+1},\theta_k)\nonumber \\
          &= \bigl(\nabla_{x_k} f_k(x_k;\theta_k)\bigr)^\top \lambda_{k+1} + \nabla_{x_k} R_k(x_k, \theta_k).
\end{align}
and the parameter gradient
\begin{align}
\label{eq:discrete_param_grad}
\nabla_{\theta_k}J &= \nabla_{\theta_k}R_k(x_k, \theta_k) + \nabla_{\theta_k}H_k(x_k,\lambda_{k+1},\theta_k)\nonumber \\
&=
\nabla_{\theta_k}R_k(x_k, \theta_k) + \bigl(\nabla_{\theta_k} f_k(x_k;\theta_k)\bigr)^\top \lambda_{k+1}.
\end{align}

\subsection{Discrete PMP residuals as wave sources}
\label{sec:discrete_pmp_sources}

We discretize the distributed forcing terms $E_\pm$ by discrete-time maximum-principle residuals.
Define node-wise residuals (typed in $\mathbb{R}^{n_k}$)
\begin{align}
\label{eq:node_residuals}
r_{x,0} &= x_0 - x_{\mathrm{in}},\\
r_{x,k} &= x_k - f_{k-1}(x_{k-1};\theta_{k-1}),\, k=1,\dots,N,\\[0.3em]
r_{\lambda,N} &= \lambda_N - \nabla_{x_N}\ell(x_N,y),\\
r_{\lambda,k} &:= \lambda_k - J_k^\top \lambda_{k+1} - \nabla_{x_k}R_k(x_k,\theta_k),\, k=0,\dots,N-1.
\end{align}
These vanish iff the forward constraints and adjoint recursion are satisfied.

We then define the discrete wave residuals (same algebraic form as \eqref{eq:pmp_residuals})
\begin{equation}
\label{eq:disc_wave_residuals}
E_{\pm,k}
= \frac{1}{\sqrt2}\Big(\Theta_k\,r_{x,k} \pm \Theta_k^{-\top}\,r_{\lambda,k}\Big),
\qquad k=0,\dots,N.
\end{equation}

\paragraph{Source injection.}
A forward Euler step of the forced hyperbolic PDE yields
\begin{equation}
\label{eq:disc_source_injection}
w_{\pm,k}^{n+1} = \bar w_{\pm,k}^n - \alpha\,E_{\pm,k}^n,
\qquad \alpha \approx c\,\Delta\tau.
\end{equation}

\subsection{Parameter update as minimal reflection}
\label{sec:param_update_discrete}

At layer $k$, define the (discrete) stationarity residual
\begin{equation}
\label{eq:param_residual}
r_{\theta,k}
:= \nabla_{\theta_k}R_k(x_k,\theta_k) + \bigl(\nabla_{\theta_k} f_k(x_k;\theta_k)\bigr)^\top \lambda_{k+1}.
\end{equation}
As elaborated in Section~\ref{sec:minimal_reflection}, imposing a matched (non-reflecting) parameter termination
recovers a preconditioned gradient descent step
\begin{equation}
\theta_k^{n+1} = \theta_k^n - \eta\,M_{\theta,k}^{-1}\,r_{\theta,k}^n,
\end{equation}
with $M_{\theta,k}\succ 0$ (e.g.\ $M_{\theta,k}=\eta^{-1}I$).

\subsection{Continuous dynamics discretized into a Layer Map}
Starting with the original continuous ODE \eqref{eq:continuous_dynamics} we can pick a one-step integrator with step $\Delta t$ to define flow maps $f_k = F_{t_k, t_k + \Delta t}$:
\begin{equation}
\label{eq:discretized_dynamics}
x_{k+1} = F_{t_k, t_k + \Delta t}(x_k, u_k)
\end{equation}
The simplest choice would be explicit Euler discretization
\begin{equation*}
x_{k+1} = x_{k} + \Delta t f(x_k, u_k)    
\end{equation*}
therefore, the continuous residual \eqref{eq:continuous_residual_x} discretizes as
\begin{equation}
E^n_{x,k} \approx \frac{x^{n}_{k+1} - x^{n}_{k}}{\Delta t} - f(x^n_k, u_k) = \frac{x^{n}_{k+1} - f_k(x^n_k, u_k)}{\Delta t}
\end{equation}
Correspondingly, the costate residual $E^n_{\lambda,k}$ must measure the violation fo the discrete adjoint equation. We define the discrete Hamiltonian at layer $k$, explicitely including the running cost scaled by the timestep $\Delta t$:
\begin{equation} 
H_k(x_k, \lambda_{k+1}, u_k) = \lambda_{k+1}^\top f_k(x_k, u_k) + \Delta t  L(x^n_k, u_k). \end{equation}
The discrete maximum principle states that for an optimal trajectory, the current costate must satisfy
\begin{equation}
\lambda_k = \nabla_{x_k} H_k.
\end{equation}
We define the wave residual as the violation of this condition, normalized by the time step: \begin{equation} 
\label{eq:discrete_residual_lambda} 
E^n_{\lambda, k} = \frac{\lambda^n_k - \nabla_{x_k} H_k(x^n_k, \lambda^n_{k+1}, u_k)}{\Delta t}. \end{equation} 
Expanding the gradient of the Hamiltonian: 
\begin{align} 
E^n_{\lambda, k} &= \frac{1}{\Delta t} \left( \lambda^n_k - \left[ \bigl(\nabla_{x_k} f_k(x^n_k, u_k)\bigr)^\top \lambda^n_{k+1} \right] \right)\nonumber \\
 &- \nabla_{x_k} L(x^n_k, u_k) . 
\end{align}

\subsection{The Principle of Minimal Reflection}
\label{sec:minimal_reflection}

A central question in the physical interpretation of deep learning is the origin of the parameter update law. Why should parameters evolve according to the negative gradient other than numerical convenience? Within the wave-variable framework, we can derive the update dynamics not as a heuristic, but as a consequence of boundary condition matching. We term this the \emph{Minimal Reflection Principle}.

We treat the parameter vector $\theta_k$ at layer $k$ as a dynamical port terminating the transmission line. The network exerts a ``force'' on this port, and the parameters respond with ``motion.''

\begin{definition}[Parameter Port Variables]
Let the \emph{effort} at the parameter port be the driving force of the loss landscape, identified as the negative gradient:
\begin{equation}
e_k(\tau) = -\nabla_{\theta_k} J(\theta).
\end{equation}
Let the \emph{flow} be the parameter velocity in optimization time:
\begin{equation}
f_k(\tau) = \dot{\theta}_k(\tau).
\end{equation}
We introduce a symmetric, positive-definite characteristic impedance matrix $Z_k \succ 0$ for the parameter port.
\end{definition}

The instantaneous power absorbed by the parameters is $P = e_k^\top f_k = -\nabla J^\top \dot{\theta} = -\dot{J}$. To ensure the system relaxes to an energy minimum, the parameter port must be \emph{dissipative}. We decompose the port interaction into scattering waves: an incident wave $a_k$ (driving the update) and a reflected wave $b_k$ (rejected energy).

\begin{definition}[Parameter Scattering]
Using the impedance metric $Z_k$, we define:
\begin{align}
\label{eq:param_wave_incident}
a_k &= \frac{1}{\sqrt{2}} \left( Z_k^{-1/2} e_k + Z_k^{1/2} f_k \right), \\
\label{eq:param_wave_reflected}
b_k &= \frac{1}{\sqrt{2}} \left( Z_k^{-1/2} e_k - Z_k^{1/2} f_k \right).
\end{align}
\end{definition}

The squared norm $\|b_k\|^2$ represents energy that is not absorbed by the parameter update but is instead reflected back into the network, causing standing waves and oscillations. An ideal solver acts as a perfectly matched load.

\begin{theorem}[Minimal Reflection Yields Gradient Descent]
The parameter update law that produces zero reflection ($b_k = 0$) is given by:
\begin{equation}
\label{eq:min_reflection_update}
\dot{\theta}_k = -Z_k^{-1} \nabla_{\theta_k} J.
\end{equation}
\end{theorem}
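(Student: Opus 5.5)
The plan is to solve the linear equation $b_k = 0$ directly from the definition of the reflected wave \eqref{eq:param_wave_reflected} and then substitute the port variables. First I fix what $Z_k^{\pm 1/2}$ means. Since $Z_k$ is symmetric positive definite, it has a unique symmetric positive definite square root $Z_k^{1/2}$, obtained from the spectral decomposition $Z_k = U\Lambda U^\top$ as $Z_k^{1/2} = U\Lambda^{1/2}U^\top$. This root is invertible with inverse $Z_k^{-1/2} = U\Lambda^{-1/2}U^\top$, and $Z_k^{1/2}Z_k^{1/2} = Z_k$. These facts are all I need from the impedance.

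Next I impose $b_k = 0$. Multiplying \eqref{eq:param_wave_reflected} by $\sqrt 2$ gives the equivalent condition $Z_k^{-1/2} e_k = Z_k^{1/2} f_k$. Applying the invertible matrix $Z_k^{-1/2}$ on the left turns this into
\begin{equation*}
f_k = Z_k^{-1/2} Z_k^{-1/2} e_k = Z_k^{-1} e_k .
\end{equation*}
Each step is reversible, so $b_k = 0$ holds if and only if $f_k = Z_k^{-1}e_k$. This establishes both existence and uniqueness of the zero-reflection law at each optimization time $\tau$.

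Finally I substitute the port variables $f_k = \dot\theta_k$ and $e_k = -\nabla_{\theta_k}J$. This gives exactly \eqref{eq:min_reflection_update}. Taking $Z_k = \eta^{-1} I$ recovers plain gradient flow with learning rate $\eta$.

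As a consistency check, I will compute the absorbed power under this law. It is
\begin{equation*}
P = e_k^\top f_k = \tfrac12\bigl(\|a_k\|^2 - \|b_k\|^2\bigr) = -\nabla J^\top Z_k^{-1}\nabla J \le 0 .
\end{equation*}
The port identity holds because the cross terms cancel by the symmetry of $Z_k^{\pm1/2}$. So $\dot J \le 0$, and the matched termination is dissipative as the framework requires.

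The only step needing care is the square-root bookkeeping, namely that $Z_k^{-1/2}Z_k^{-1/2} = Z_k^{-1}$. I will settle it by committing to the symmetric principal root; the computation is otherwise routine.
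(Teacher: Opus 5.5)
Your argument is correct and is exactly the paper's proof: set $b_k=0$ to get $Z_k^{-1/2}e_k=Z_k^{1/2}f_k$, solve for $f_k=Z_k^{-1}e_k$, and substitute $e_k=-\nabla_{\theta_k}J$, $f_k=\dot\theta_k$. Fixing the symmetric square root and noting that each step is reversible are harmless additions. There is one sign slip in your optional consistency check. Under this law the absorbed power is $P=e_k^\top f_k=\nabla J^\top Z_k^{-1}\nabla J=\tfrac12\|a_k\|^2\ge 0$, not $\le 0$. The quantity $-\nabla J^\top Z_k^{-1}\nabla J$ is $\dot J=-P$, and $\dot J\le 0$ is what gives descent.
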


\begin{proof}
Setting $b_k = 0$ in Eq.~\eqref{eq:param_wave_reflected} implies $Z_k^{-1/2} e_k = Z_k^{1/2} f_k$. Solving for flow yields $f_k = Z_k^{-1} e_k$. Substituting the definitions of effort and flow gives $\dot{\theta}_k = -Z_k^{-1} \nabla_{\theta_k} J$.
\end{proof}

\subsubsection{Physical Interpretation of Optimizers}
This result unifies common optimization algorithms as different strategies for impedance matching:

\begin{enumerate}
    \item \textbf{Gradient Descent (Resistive Matching):}
    Choosing a scalar impedance $Z_k = \frac{1}{\eta} I$ corresponds to terminating the line with a uniform resistor. This yields standard Gradient Descent $\dot{\theta} = -\eta \nabla J$. However, if the local curvature of the loss landscape is anisotropic, this scalar impedance creates a mismatch, causing reflections (oscillations) that slow down convergence.

    \item \textbf{Newton's Method (Curvature Matching):}
    The "true" physical impedance of the loss landscape is its local curvature (Hessian), $H_k = \nabla^2_{\theta_k} J$. Reflections occur due to the mismatch between the optimizer's impedance $Z_k$ and the landscape's impedance $H_k$.
    If we choose $Z_k = H_k$, the port becomes \emph{transparent} to the error signal. The parameters yield exactly as much as the curvature demands, absorbing the gradient wave completely. This recovers Newton's method $\dot{\theta} = -H_k^{-1} \nabla J$, explaining its rapid convergence as a physical "no-reflection" condition.

    \item \textbf{Momentum (Inductive Matching):}
    If we generalize the impedance to the complex frequency domain, $Z(s) = R + sL$, the constitutive relation becomes $e = (R + L\partial_\tau)f$. This yields the second-order dynamics of the Heavy Ball method:
    \begin{equation}
    -\nabla J = R \dot{\theta} + L \ddot{\theta}.
    \end{equation}
    Physically, the "inductance" $L$ gives the parameters mass, allowing them to store kinetic energy to smooth out high-frequency reflections from the noisy loss landscape.
\end{enumerate}

\section{Related Work}
\label{sec:related_work}

\paragraph{Decoupling and unlocking backpropagation.}
A broad line of research seeks to mitigate the sequential dependencies of backpropagation. System-level approaches such as pipeline parallelism overlap forward and backward computation across microbatches
(e.g., GPipe \cite{huang2019gpipe} and PipeDream \cite{narayanan2019pipedream}) to improve hardware utilization without changing the learning rule.

Algorithmic approaches aim to \emph{decouple} layer updates by providing local or approximate error signals. Synthetic gradients (Decoupled Neural Interfaces) allow earlier layers to update before the full backward pass
by learning predictors of gradients \citep{jaderberg2017decoupled}.

Equilibrium-based approaches provide alternative credit assignment mechanisms that avoid explicit backpropagation under certain conditions \citep{scellier2017equilibrium}.Our approach is different in spirit: we do not approximate gradients by auxiliary models nor replace the adjoint dynamics; instead, we introduce a relaxation process whose fixed points satisfy the exact maximum-principle conditions, while enabling concurrent signal propagation along depth.

Closest in spirit are approaches to computing gradients directly using the physics of the system (e.g. \cite{cin2025training, pourcel2025lagrangian}).

\paragraph{Parallel-in-time and relaxation methods for coupled boundary problems.}
The forward--backward structure of PMP and adjoint equations has motivated iterative solvers such as
shooting methods, multiple shooting, and forward--backward sweep methods in optimal control.
More broadly, parallel-in-time integration schemes (e.g., Parareal \cite{staff2005stability} and multigrid-reduction-in-time \cite{FriedhoffEtAl2013})
enable concurrency across time for dynamical systems by iterating on coarse-to-fine corrections.
Our worldsheet formulation is conceptually aligned with relaxation methods, but differs by explicitly
designing \emph{hyperbolic} propagation with finite speed and by using scattering variables that yield a
passivity-based energy balance.

\paragraph{Backpropagation and the Brain}A major impetus for finding alternatives to the backpropagation algorithm has been to reconcile its shortcomings as a learning algorithm of biological brains \cite{bengio2015towards, lillicrap2020backpropagation}. Two main challenges to the biological plausibility of backpropagation are the weight transport problem and the temporal credit assignment problem. This work points towards the physical properties of brain matter as an excitable medium that support the propagation, scattering, and dissipation of waves as a potential solution. In this view the physical properties of the medium implicitly define the optimization objective.

\paragraph{Wave variables, scattering transformations, and passivity.}
Wave/scattering variables are classical tools in circuit theory, signal processing, and passivity-based control. An early link between scattering and optimal control can be found in \cite{redheffer1960supplementary}. In networked and delayed control, scattering transformations (wave variables) are used to preserve passivity and stability under communication delays, notably in bilateral teleoperation \citep{niemeyer2002stable,niemeyer2004telemanipulation} and other systems with time-delayed communication \cite{wang2006contraction}.
Port-Hamiltonian modeling provides a general framework for energy-conserving interconnections and dissipative
ports \citep{van2014port}.
Our energy identity and boundary scattering interpretation are closely aligned with these traditions, but here the
``transmission line'' is the computational graph (depth or physical time), and dissipation corresponds to
optimization progress.

\paragraph{Energy-Based Architectures and Equilibrium Models.} Our focus on continuous-time relaxation shares conceptual roots with Energy-Based Models (EBMs) and the Joint Embedding Predictive Architecture (JEPA) \citep{lecun2022path}, which view inference and planning as a physical process where the system settles into a low-energy state.

While implementations of JEPA typically rely on global backpropagation for parameter updates \citep{assran2023self,assran2025v,chen2025vl}, our wave-scattering formulation provides a concrete dynamical mechanism for this relaxation. EBMs often model relaxation as a diffusive process (implying infinite signal speed). In contrast, our framework is derived from the \emph{hyperbolic} relaxation of the Maximum Principle, explicitly modeling a finite speed of information ($c<\infty$). This offers a rigorous path toward training world models using strictly local, asynchronous signals.

\paragraph{Physical and analog computation for optimization.}
A growing literature explores optimization performed by physical dynamics (analog circuits, optical systems,
mechanical relaxations) in which the system minimizes an energy-like objective through dissipation.
Our contribution is not to propose a specific substrate, but to provide a mapping from maximum-principle residual
minimization to wave propagation with passive terminations, suggesting a principled route for exploiting wave-supporting physical media for optimization.

\section{Discussion}
We have presented a reformulation of the backpropagation of error—traditionally a sequential, global operation—as a local, physical scattering process on a $(1+1)$-dimensional worldsheet. By restoring a finite propagation speed to the optimization signals, we unlocked the layers of the network, allowing them to update asynchronously while preserving the global optimality conditions in the equilibrium limit. In contrast to other methods this formulation does not require settling into the equilibrium at any point however. Therefore parameter optimization can occur as new samples are streaming in. 

\textbf{Computational Implications: From $O(N)$ to $O(1)$.}
The standard backpropagation algorithm suffers from a linear sequential dependence on depth $N$. In contrast, the Wave-PMP update (Algorithm \ref{alg:worldsheet_unlocked}) requires only nearest-neighbor communication. On massively parallel hardware, the computational time per algorithmic step becomes $O(1)$, independent of depth. While the total ``optimization time'' $\tau$ required for signals to travel from input to output remains bounded by the light cone ($T \propto N/c$), the method eliminates pipeline bubbles and memory pressure. Every layer performs useful computation at every time step, processing wavefronts from previous iterations rather than waiting. Whether this yields wall-clock improvements depends on the depth, communication latency, and the number of $\tau$-steps
needed to reach an acceptable residual level.

\textbf{Optimization as Impedance Matching.} Perhaps the most significant theoretical insight offered by this framework is the interpretation of the optimizer as a boundary condition. We showed that the choice of parameter update rule is equivalent to choosing the impedance of the termination port. Standard gradient descent corresponds to a generic resistive load, often resulting in signal reflections (oscillations) when the landscape curvature mismatches the learning rate. In this view, second-order methods like Newton's method are not merely ``faster'' algebraic solvers, but are physically \emph{transparent} terminations that absorb error energy without reflection. This provides a clear physical intuition for why curvature information is essential for rapid convergence. A practical challenge is that accurate curvature metrics are expensive; nonetheless, structured or diagonal approximations (e.g., per-parameter scalings) may already reduce reflections and improve convergence. Formalizing this connection to adaptive optimizers is a promising direction. 

\textbf{Towards Analog and Neuromorphic Hardware.} Finally, this theory suggests that the strict digital implementation of backpropagation is not the only path to training deep systems. The Wave-PMP equations describe a passive physical system. This implies that training could be implemented on analog substrates—such as optical lattices, transmission line circuits, or continuous-time neuromorphic chips—where the physics of the medium itself performs the ``computation.'' If a substrate supports wave propagation, scattering, and local dissipation, it can thought of as intrinsically solving an optimization problem. This opens the door to energy-efficient learning machines that do not simulate physics, but rather exploit it.

\textbf{Limitations and Future Work.}
The relaxation of the Maximum Principle comes at the cost of increased state space; the system must store not just the current weights and activations, but the transient wave variables traveling between layers. Furthermore, while the continuous theory guarantees energy dissipation under certain assumptions, the discrete algorithm requires careful choice of the Courant number $\nu$ and dissipation terms or a different numerical discretisation to ensure numerical stability. The choice of per-layer metric and therefore the impedance is left open, 
and future work will focus on a theoretically grounded choice. On the numerical side future work will focus on the stability analysis of the width-changing junctions and the empirical evaluation of both neural network training and the solution of control problems.

\section*{Impact Statement}

This paper presents work whose goal is to advance the field of Machine
Learning. There are many potential societal consequences of our work, none
which we feel must be specifically highlighted here.

\bibliography{bibliography}
\bibliographystyle{icml2026}

\newpage
\appendix
\onecolumn

\end{document}